\numberwithin{equation}{section}
\theoremstyle{plain}
\newtheorem{theorem}[equation]{Theorem}
\newtheorem{lemma}[equation]{Lemma}
\newtheorem{corollary}[equation]{Corollary}
\newtheorem{proposition}[equation]{Proposition}
\newtheorem{remark}[equation]{Remark}
\newtheorem{algorithm}[equation]{Algorithm}
\newtheorem{example}[equation]{Example}
\theoremstyle{definition}
\newcommand{\Magma}{{\sf Magma}}
\def\N{{\mathbb N}}
\def\Z{{\mathbb Z}}
\begin{document}
\title{The aliquot constant}
\author{Wieb Bosma}
\author{Ben Kane}
\address{Department of Mathematics, Radboud Universiteit, Nijmegen NL}
\email{\{W.Bosma, B.Kane\}@math.ru.nl}
\date{\today}
\begin{abstract}
The average value of $\log s(n)/n$ taken over the first $N$ even integers
is shown to converge to a constant $\lambda$ when $N$ tends to infinity; 
moreover, the value of this constant
is approximated and proven to be less than $0$. Here $s(n)$
sums the divisors of $n$ less than $n$. Thus
the geometric mean of $s(n)/n$, the growth factor of the function $s$, in the long run
tends to be less than $1$. This could be interpreted as probabilistic
evidence 
that aliquot sequences tend to remain bounded.
\end{abstract}
\keywords{}
\maketitle

\section{Introduction}
\noindent
This paper is concerned with the average growth of aliquot sequences.
An aliquot sequence is a sequence $a_0, a_1, a_2, \ldots$ of positive
integers obtained by iteration of the {\it sum-of-aliquot-divisors} function $s$,
which is defined for $n>1$ by
$$s(n)=\sum_{d|n\atop d<n} d.$$
The aliquot sequence with starting value $a_0$ is
then equal to 
$$a_0,\quad a_1=s(a_0),\quad a_2=s(a_1)=s^2(a_0), \quad\ldots;$$
we will say that the sequence terminates (at 1) if $a_k=s^{k}(a_0)=1$
for some $k\geq0$. The sequence cycles (or is said to end in a cycle)
if $s^k(a_0)=s^l(a_0)$ for some $k, l$ with $0\leq l<k$, where $s^0(n)=n$
by definition.

Note that $s$ is related to the ordinary
{\it sum-of-divisors} function $\sigma$, with
$\sigma(n)=\sum_{d|n} d$, by $s(n)=\sigma(n)-n$ for integers $n>1$. 

The main open question in this area can be phrased as: does every aliquot cycle
remain bounded? That is,
does every aliquot sequence terminate (at $1$) or cycle, or do sequences exist that
grow unbounded? The conjecture that all sequences remain bounded is often
referred to as the Catalan-Dickson conjecture.

The origin of this paper lies in computational work done 
to test integer factorization routines for the computer algebra system \Magma
\cite{Magma}.
The most efficient known method to compute $s(n)$ uses the multiplicativity
of $\sigma$ and requires the factorization of $n$. Iterating $s$ provides
long sequences of more or less random numbers of similar size, and this property
is useful in testing factorization methods. It was noticed that for even starting
values the aliquot sequences tend to increase or decrease in size fairly slowly,
by an amount that seemed constant over different starting values, whereas 
sequences with odd starting values usually terminate quickly.

Around 1996 Andrew Granville \cite{Granville} furnished a proof for this
phenomenon; see Theorem (\ref{thm:all}) and Theorem (\ref{thm:even}) below.
Further computation seemed to suggest that the constant $\lambda$ involved
would be smaller (but only just!) than $0$, but no proof of this was obtained.
Recently, we were able to obtain estimates that are good enough to prove
this property.

Altogether this led to the main result of this paper.
\begin{theorem}
The geometric mean $\mu$ of $\frac{s(2n)}{2n}$ over all positive integers $n$ exists, and equals
$$\mu=e^{-0.03\cdots}.$$
In particular, the aliquot growth factor $\mu=0.969\cdots <1$.
\end{theorem}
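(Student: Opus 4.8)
The plan is to reduce the statement to a certified numerical evaluation of $\lambda:=\log\mu$. Put $h(m):=\sigma(m)/m$, so that $s(2n)/(2n)=h(2n)-1$; by Theorem~\ref{thm:even} the limit
$$\lambda=\lim_{N\to\infty}\frac1N\sum_{n=1}^{N}\log\bigl(h(2n)-1\bigr)$$
exists (it will also fall out of the identities below), so it suffices to evaluate $\lambda$ to a couple of decimal places, and in particular to certify $\lambda<0$. The key elementary observation is that $2n$ is even, and $h$ is multiplicative with $h(2^{b})=2-2^{-b}\ge\tfrac32$ and $h(p^{k})\ge1$, so $h(2n)\ge\tfrac32$ and $0<1/h(2n)\le\tfrac23$ for every $n$. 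Hence
$$\log\bigl(h(2n)-1\bigr)=\log h(2n)+\log\!\Bigl(1-\frac1{h(2n)}\Bigr)=\log h(2n)-\sum_{j\ge1}\frac1{j\,h(2n)^{j}},$$
the series converging geometrically, uniformly in $n$.

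Averaging over $n\le N$ and letting $N\to\infty$ — the interchange with the $j$-sum being justified by $\frac1N\sum_{n\le N}\sum_{j>J}\frac1{j\,h(2n)^{j}}\le\sum_{j>J}\frac{(2/3)^{j}}{j}\to0$ uniformly in $N$ — gives
$$\lambda=L_{0}-\sum_{j\ge1}\frac{L_{j}}{j},\qquad L_{0}:=\lim_{N\to\infty}\frac1N\sum_{n\le N}\log h(2n),\quad L_{j}:=\lim_{N\to\infty}\frac1N\sum_{n\le N}h(2n)^{-j}.$$
Since for $n=2^{v}m$ with $m$ odd one has $h(2n)=h(2^{v+1})h(m)$, the function $n\mapsto h(2n)^{-j}$ is a multiplicative function of $m$ times a function of $v$, and $n\mapsto\log h(2n)$ is a sum of such pieces; so by the standard theory of mean values of additive and multiplicative functions these limits exist and equal
$$L_{j}=\Bigl(\sum_{a\ge0}2^{-a-1}h(2^{a+1})^{-j}\Bigr)\prod_{p\ge3}\Bigl(\sum_{k\ge0}p^{-k}(1-p^{-1})h(p^{k})^{-j}\Bigr),$$
$$L_{0}=\sum_{a\ge0}2^{-a-1}\log h(2^{a+1})+\sum_{p\ge3}\sum_{k\ge1}p^{-k}(1-p^{-1})\log h(p^{k}),$$
where the exponent-sums converge geometrically and the outer prime sum/product converge because each odd local factor is $1+O(j^{2}/p^{2})$, resp.\ each odd summand is $O(1/p^{2})$.

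The remaining task is to evaluate these expressions with rigorous error control: truncate the prime product/sum at a bound $P$, the exponents at $K$, and the outer sum at $j=J$, and bound every discarded tail — the exponent tails are geometric, the $j$-tail is $\le\sum_{j>J}\frac{(2/3)^{j}}{j}$, and the prime tail of $L_{j}$ is at most $L_{j}\bigl(\exp(\sum_{p>P}O(j^{2}/p^{2}))-1\bigr)=O\bigl(j^{2}(2/3)^{j}/(P\log P)\bigr)$, which is summable against $1/j$. Carried out in interval arithmetic this pins $\lambda$ to any desired accuracy and yields $\lambda=-0.03\cdots<0$, hence $\mu=e^{\lambda}=0.969\cdots<1$. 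I expect the main obstacle to be exactly this final bookkeeping rather than anything conceptual: the prime tails decay only like $1/(P\log P)$ and must be controlled simultaneously for all $j$, while each local factor $\sum_{k}p^{-k}(1-p^{-1})h(p^{k})^{-j}$ is itself known only to finite precision, so $P$, $J$, $K$ must be taken large and the estimates kept clean enough that the accumulated uncertainty stays comfortably inside a negative interval for $\lambda$. The conceptual point, by contrast, is the splitting above: it trades the non-multiplicative integrand $\log(h(2n)-1)$ for a logarithm — an additive function with a transparent mean value — plus a geometric series each of whose terms $h(2n)^{-j}$ is essentially multiplicative, and hence has an Euler-product mean value.
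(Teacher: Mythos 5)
Your decomposition is exactly the paper's: $\log(h(2n)-1)=\log h(2n)-\sum_{j\ge1}\frac{1}{j}h(2n)^{-j}$ using $h(2n)\ge\frac32$, and your $L_0$ and $L_j$ agree with the paper's $2\alpha(2)+\sum_{p\ge3}\alpha(p)$ and $(2\beta_j(2)-1)\prod_{p\ge3}\beta_j(p)$ respectively, so the proof of convergence and the formula $\lambda=L_0-\sum_j L_j/j$ match Theorem~\ref{thm:even} and its Lemma term for term. Where you diverge is the certified lower bound for $\beta=\sum_j L_j/j$, which is where nearly all of the paper's labour lies: you propose truncating the Euler product over $p\le P$ and bounding the prime tail of $\prod_{p>P}\beta_j(p)$, whereas the paper rewrites each product as a signed sum $\sum_n(-1)^{\nu(n)}g_j(n)h_j(n)$ over integers, truncates at $n\le N_j$, and controls the tail by constructing the finite exceptional sets $S_{j,e}$ via the auxiliary sets $T_j^{e,c}$; your route is plausible and conceptually cleaner, but your tail estimate is stated only as an $O(\cdot)$ and would need to be made fully explicit (with effective constants uniform in $j$ near $p\asymp j$) before it certifies $\lambda<0$.
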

\noindent
Roughly summarizing, this means: on average, even aliquot sequences
tend to decrease in size! In some sense this may be taken as 
probabilistic evidence in favour of the Catalan-Dickson conjecture.

This paper is built up as follows. After some preliminaries, we state and prove
the convergence of the geometric mean for even values; also, an expression for
the resulting constant $\lambda=\log\mu$ as a difference of $\alpha$ (closely
related to the growth of $\sigma(n)/n$) and $\beta$ is
derived. In the next section an easy upper bound for $\alpha$ (which involves
a sum over all prime numbers) is given. The final section is devoted to a lower 
bound for $\beta$; this is trickier, as it involves an infinite sum of terms
that themselves are infinite products over all primes.
\section{Elementary observations}
\noindent
Although $\sigma(n)> n$ for $n>1$, all three possibilities
$s(n)<n$, $s(n)=n$ and $s(n)>n$ for $s$ do occur: $s(p)=1$ for prime numbers,
and in general $s(p^k)=1+p+\cdots+p^{k-1}<p^k$ for prime powers;
$s(P)=P$ for perfect numbers $P=2^{p-1}(2^p-1)$ (with $2^p-1$ prime),
and $s(n)>n$ for $n=P\cdot q$, where $P$ is perfect and $q$ any odd
prime other than $2^p-1$, since $\sigma(Pq)=2P(q+1)$.

Besides terminating at $1$ after hitting a prime, or ending in a perfect number,
it is also possible that an aliquot sequence ends in a cycle of length $2$ or more:
amicable numbers are pairs $m, n$ for which $\sigma(m)=m+n=\sigma(n)$, hence
$s(m)=n$ and $s(n)=m$, and a 2-cycle is formed. Sociable numbers form
cycles of larger length (and are known only for length $4$, $5$, $6$, $8$, $9$
and $28$; see \cite{Moews}). For more on these cycles, including historical remarks, see also \cite{Kobayashi}.

Note that
$$\sigma(n)=\prod_{p^k||n}{(1+p+\cdots+p^k)}$$
is multiplicative, while $s(n)=\sigma(n)-n$ is not.
A useful observation is that
\begin{equation}\label{prop:oneandahalf}
\frac{\sigma(2n)}{2n}\geq\frac{3}{2},
\end{equation}
by multiplicativity of $\sigma$ and since $\frac{\sigma(2^k)}{2^k}\geq \frac{3}{2}$
for $k\geq 1$.

Also note that $1+p+\cdots+p^k$ (for prime $p$) is only odd when
$p$ is odd and $k$ is even. Hence $\sigma(n)-n$ for odd $n$ will only be
even if $n$ is a square, and for even $n$ it will only be odd if $n$ is
a square or twice a square.
Hence: unless an accidental square (or twice a square) occurs, parity is
preserved in aliquot sequences! In this sense $s$ does not behave randomly at
all. 

In fact, divisibility by (even) perfect numbers also tends to persist, and
Guy and Selfridge \cite{GuySelfridge}  studied other types of persistence as well, but we
will ignore all but the parity aspect and only consider even and odd aliquot
sequences separately.

We are interested in the growth of the sequence
$n, s(n), s^2(n), \ldots$, in other words, in the question of
whether $s(n)/n$ tends to be smaller or greater than $1$.

Apparently, Wunderlich (in \cite{Wunderlich}) was the first to state
the following result explicitly; the first statement
(formulated for $\sigma$ rather than $s$) appears already in \cite{Dirichlet}.
\begin{theorem}\label{thm:wun}
{\bf Theorem} {\sl 
\begin{eqnarray*}
\lim_{N\rightarrow\infty}{1\over N}\sum_{n=1}^{N}{s(n)\over n}&=&{\pi^2\over 6}-1=0.6449\ldots\\
\lim_{N\rightarrow\infty}{1\over N}\sum_{n=1}^{N}{s(2n)\over 2n}&=&{5\pi^2\over 24}-1=1.0562\ldots\\
\lim_{N\rightarrow\infty}{1\over N}\sum_{n=1}^{N}{s(2n-1)\over 2n-1}&=&{3\pi^2\over 24}-1=0.2337\ldots\\
\end{eqnarray*}}
\end{theorem}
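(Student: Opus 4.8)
The plan is to reduce all three statements to one elementary lattice‑point count after replacing $s$ by $\sigma$. Since $s(n)=\sigma(n)-n$ for $n>1$ (and the lone term $n=1$ is irrelevant to any average), each of the three averages of $s$ differs by exactly $1$ from the corresponding average of $\sigma(n)/n$, of $\sigma(2n)/(2n)$, or of $\sigma(2n-1)/(2n-1)$, so it suffices to evaluate those. The one identity I need is $\sigma(m)/m=\sum_{d\mid m}1/d$, which follows by replacing each divisor $e\mid m$ by $d=m/e$.

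For the first limit, substitute this identity and interchange the order of summation:
$$\frac1N\sum_{n=1}^{N}\frac{\sigma(n)}{n}=\frac1N\sum_{n=1}^{N}\sum_{d\mid n}\frac1d=\frac1N\sum_{d=1}^{N}\frac1d\left\lfloor\frac Nd\right\rfloor.$$
Writing $\lfloor x\rfloor=x+O(1)$ turns the right‑hand side into $\sum_{d\le N}d^{-2}+O\!\left(\tfrac1N\sum_{d\le N}d^{-1}\right)=\zeta(2)+O\!\left(\tfrac{\log N}{N}\right)$, so the limit is $\pi^2/6$, hence $\pi^2/6-1$ for $s$.

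For the second and third limits I would run exactly the same computation, now carefully tracking the factor $2$. Expanding $\sigma(2n)/(2n)=\sum_{d\mid 2n}1/d$ and interchanging summation, the number of $n\le N$ with $d\mid 2n$ equals $\lfloor N/d\rfloor$ when $d$ is odd and $\lfloor 2N/d\rfloor$ when $d$ is even (because then $d\mid 2n\iff(d/2)\mid n$). Replacing $\lfloor x\rfloor$ by $x$ at the cost of an $O(N^{-1}\log N)$ error, the limit becomes
$$\sum_{d\ \mathrm{odd}}\frac{1}{d^{2}}+\sum_{d\ \mathrm{even}}\frac{2}{d^{2}}=\tfrac34\,\zeta(2)+2\cdot\tfrac14\,\zeta(2)=\frac{\pi^{2}}{8}+\frac{\pi^{2}}{12}=\frac{5\pi^{2}}{24},$$
using $\sum_{d\ \mathrm{odd}}d^{-2}=(1-2^{-2})\zeta(2)$ and $\sum_{d\ \mathrm{even}}d^{-2}=2^{-2}\zeta(2)$; subtracting $1$ gives $\tfrac{5\pi^2}{24}-1$. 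In the odd case every divisor $d$ of $2n-1$ is odd, and the number of $n\le N$ with $d\mid 2n-1$ is $N/d+O(1)$ (one residue class mod $d$, since $2$ is invertible mod $d$), so the limit is $\sum_{d\ \mathrm{odd}}d^{-2}=\tfrac34\zeta(2)=3\pi^2/24$, giving $\tfrac{3\pi^2}{24}-1$.

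As a sanity check, the even and the odd integers each have density $\tfrac12$, so the average over all $n$ is the mean of the two half‑averages: $\tfrac12\bigl(\tfrac{5\pi^2}{24}-1\bigr)+\tfrac12\bigl(\tfrac{3\pi^2}{24}-1\bigr)=\tfrac{\pi^2}{6}-1$, consistent with the first formula, so really only two of the three statements are independent. I do not anticipate a genuine obstacle: the only delicate points are the bookkeeping of which residue class the condition $d\mid 2n$ (respectively $d\mid 2n-1$) imposes — this is exactly where the factor $2$ separating $\tfrac{5\pi^2}{24}$ from $\tfrac{\pi^2}{6}$ enters — and the routine verification that the error $O(N^{-1}\sum_{d\le N}d^{-1})=O(N^{-1}\log N)$ coming from $\lfloor x\rfloor\mapsto x$ tends to $0$, which is what legitimizes exchanging the limit with the absolutely convergent series for $\zeta(2)$.
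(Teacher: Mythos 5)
Your argument is correct and is essentially the same as the paper's: both reduce to the identity $\sigma(m)/m=\sum_{d\mid m}1/d$ followed by interchanging the order of summation and estimating the resulting lattice-point count, which is exactly what the paper means by ``the same argument usually given for the computation of $\zeta(2)$.'' The only difference is bookkeeping: the paper carries out only the odd case explicitly and lets the other two follow (the first being classical, the second forced by the relation $\tfrac12\bigl((\text{even})+(\text{odd})\bigr)=(\text{all})$, which you observe as a sanity check at the end), whereas you compute all three limits directly.
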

\noindent
As
\begin{equation*}
\sum_{n=1}^N{s(2n-1)\over 2n-1}= \sum_{n=1\atop n\ \mbox{\tiny odd}}^{2N}\left({\sigma(n)\over n}-1\right)
\end{equation*}
we find by the same argument usually given for the computation of $\zeta(2)$, that
$$
\lim_{N\rightarrow\infty}\sum_{n=1}^N{1\over N}{s(2n-1)\over 2n-1}=
\sum_{x=1\atop x\ \mbox{\tiny odd}}^{2N}
{1\over x^2}-1={3\over 4}\zeta(2)-1.
$$
from which the whole theorem follows.

Based on the second statement in Theorem (\ref{thm:wun}), Guy and Selfridge 
seem to have drawn the conclusion that even aliquot sequences will tend to grow
unbounded (see \cite{GuySelfridge} page 103). Just like a sequence in which the terms
are alternately multiplied by $5$ and by $\frac{1}{5}$ will remain bounded
although the average growth factor tends to $2.6$, we cannot draw the conclusion
that even aliquot sequences tend to grow unbounded from the fact that
the average of $s(2n)/2n$ exceeds 1. What really matters is 
not the arithmetic mean, but rather
the geometric mean:
$$
\root N \of {\prod_{n=1}^N{s(n)\over n}}=\exp\left({{1\over N}{\sum_{n=1}^N\log(s(n)/n)}}\right).
$$
\begin{remark}\rm
To draw conclusions about the Catalan-Dickson conjecture, one needs more
than just the arithmetic or geometric mean of $\sigma(n)/n$. Davenport \cite{Davenport}
showed that there exists a continuous function of $t$ giving 
the natural density of $t$-abundant numbers satisfying $\sigma(n)/n\geq t$.
See \cite{Weingartner}, and \cite{Petermann}, for recent progress on this
function. This needs to
be combined with the persistance of drivers as in  \cite{GuySelfridge}.

A good approximation is known for the value of the function at
$t=2$, implying that
$s(n)/n$ exceeds $1$ for a little less than a quarter of all $n$, see
\cite{Deleglise}.

Finally, note that these questions relate to deep problems, as it is
known \cite{Robin} that the Riemann hypothesis is equivalent to the
statement that $\sigma(n)/n$ is bounded by $e^\gamma \log\log n$ for
all $n\geq 5041$; see also \cite{Briggs}, \cite{Lagarias}, \cite{Wojtowicz} on this
connection.
\end{remark}
\section{The aliquot constant}
\noindent
We first show the following result on the geometric mean for the
ordinary sum of divisors function.
\begin{proposition}\label{one}
\begin{equation*}
\frac{1}{N}\sum_{n\leq N}\log \frac{\sigma(n)}{n} = A+O\left(\frac{1}{\log x}\right),\quad
\textrm{with}\quad A=\sum_{p\mathrm{ prime}}\alpha(p)\approx 0.4457,
\end{equation*}
where
\begin{equation}
\alpha(p)=\left(1-{1\over p}\right)\sum_{m\geq 1}{1\over p^m}\log\left(1+{1\over p}+\cdots+
{1\over p^m}\right).\end{equation}
\end{proposition}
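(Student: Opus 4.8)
The plan is to compute the average of $\log(\sigma(n)/n)$ by exchanging the order of summation. Write $\sigma(n)/n = \prod_{p^k \| n} (1 + 1/p + \cdots + 1/p^k)$, so that
$$\log\frac{\sigma(n)}{n} = \sum_{p^k \| n} \log\left(1 + \frac1p + \cdots + \frac1{p^k}\right) = \sum_{p} f_p(v_p(n)),$$
where $v_p(n)$ is the exponent of $p$ in $n$ and $f_p(m) = \log(1 + 1/p + \cdots + 1/p^m)$ (with $f_p(0)=0$). Summing over $n \leq N$ and swapping sums gives
$$\sum_{n \leq N} \log\frac{\sigma(n)}{n} = \sum_{p \leq N} \sum_{m \geq 1} f_p(m) \cdot \#\{n \leq N : v_p(n) = m\}.$$
The count $\#\{n \leq N : v_p(n) = m\}$ equals $\lfloor N/p^m \rfloor - \lfloor N/p^{m+1} \rfloor = N/p^m - N/p^{m+1} + O(1) = \frac{N}{p^m}(1 - \frac1p) + O(1)$. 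Substituting this in, the main term becomes $N \sum_{p} (1 - 1/p) \sum_{m \geq 1} p^{-m} f_p(m) = N \sum_p \alpha(p) = NA$, which is exactly the claimed constant.

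The work then lies in controlling the two sources of error: the $O(1)$ terms from the floor functions, and the tail where $p$ is large. For the floor-function error, note that for each prime $p$ only the values $m$ with $p^m \leq N$ contribute, so there are at most $\log_p N \leq \log N / \log p$ relevant values of $m$, each contributing $O(f_p(m))$; since $f_p(m) = \log(1 + \frac1p + \cdots + \frac1p^m) \leq \log\frac{1}{1-1/p} = O(1/p)$ for all $m$, and there are at most $N$ primes to consider, a crude bound gives error $O(\sum_{p \leq N} \frac{\log N}{\log p} \cdot \frac1p)$. This is too weak; instead I would split at $p \leq \sqrt N$ versus $p > \sqrt N$. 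For $p > \sqrt N$ one has $v_p(n) \leq 1$ for $n \leq N$, so only $m=1$ occurs and $f_p(1) = \log(1+1/p) = 1/p + O(1/p^2)$, making the large-prime contribution easy to estimate directly and match it against $\sum_{p > \sqrt N}\alpha(p)$, whose tail is $O(1/\log N)$ by Mertens-type bounds since $\alpha(p) \asymp 1/p^2$... wait — actually $\alpha(p) = (1-1/p)\sum_m p^{-m} f_p(m)$ with $f_p(1) \sim 1/p$ gives $\alpha(p) \sim 1/p^2$, so $\sum_{p > y} \alpha(p) = O(1/y) = O(N^{-1/2})$, which is far better than needed. For $p \leq \sqrt N$ the floor error is $O(1)$ per $(p,m)$ pair over $O(\sqrt N \cdot \log N)$ pairs, i.e.\ $O(\sqrt N \log N) = o(N/\log N)$, which is absorbed.

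The main obstacle I anticipate is bookkeeping the error term to the stated precision $O(1/\log x)$ rather than something weaker like $O(\log\log N / \log N)$ or $O(N^{-1/2}\log N)$: one must be careful that the truncation of the prime sum defining $A$ (replacing $\sum_{p \leq N}$ by $\sum_p$) and the floor-function discrepancies combine to give exactly $O(1/\log N)$. In fact the dominant error should come from the tail $\sum_{p > N}\alpha(p) = O(1/N)$ and the partial-summation estimate of the large-prime block, so the $O(1/\log x)$ in the statement is likely not tight and is stated merely for convenience (matching the error term one would naturally carry through if using the Prime Number Theorem with classical error term somewhere, though elementary Mertens bounds suffice here). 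The remaining routine point is justifying absolute convergence of $A = \sum_p \alpha(p)$, which is immediate from $0 \leq \alpha(p) \leq (1-1/p)\log\frac{1}{1-1/p} \cdot \sum_{m\geq 1} p^{-m} = O(1/p^2)$, and the numerical value $A \approx 0.4457$ follows by summing the rapidly convergent series over small primes and bounding the tail.
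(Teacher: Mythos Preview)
Your argument is correct and reaches the result by the same overall strategy as the paper---write $\log(\sigma(n)/n)$ as a sum over prime-power data, swap the order of summation, and replace integer counts by their real approximations---but the decomposition you use is different. You group by the \emph{exact} prime power $p^k\Vert n$, so the relevant count is $\lfloor N/p^m\rfloor-\lfloor N/p^{m+1}\rfloor$ and the constant $\alpha(p)$ falls out immediately, at the cost of a case split $p\le\sqrt N$ versus $p>\sqrt N$ to tame the floor errors. The paper instead telescopes: it writes
\[
\frac{\sigma(n)}{n}=\prod_{p^m\mid n}\frac{\sigma(p^m)/p^m}{\sigma(p^{m-1})/p^{m-1}},
\]
the product running over \emph{all} prime-power divisors, so the count is simply $\lfloor x/p^m\rfloor$ and the floor error is bounded in one line by the number of prime powers up to $x$, namely $O(x/\log x)$; the price is a short algebraic rearrangement at the end to recognise $\alpha(p)$. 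Your route gives a sharper error (power-saving rather than $O(1/\log N)$), while the paper's telescoping avoids the split and keeps the analysis shorter. Both are standard; your remark that the stated $O(1/\log x)$ is not tight is accurate.
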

\noindent
\begin{proof}
Taking the product over all powers $p^m$ dividing $n$, with $p$ prime and 
$m\geq 1$, we have
$$\frac{\sigma(n)}{n}=\prod_{p^m\vert n}\frac{\sigma(p^m)/p^m}{\sigma(p^{m-1})/p^{m-1}},$$
and hence
\begin{eqnarray*}
\sum_{n\leq x}\log\frac{\sigma(n)}{n}&=&\sum_{n\leq x}\sum_{p^m\vert n}\log\frac{p^{m+1}-1}{p(p^m-1)}=\sum_{p^m\leq x}\log\frac{p^{m+1}-1}{p(p^m-1)}\left[\frac{x}{p^m}\right]\\
&=&x\sum_{p^m\leq x}\frac{1}{p^m}\log\frac{p^{m+1}-1}{p(p^m-1)}+O\left(\frac{x}{\log x}\right).
\end{eqnarray*}
For a fixed prime $p$ we have
$$\frac{p^{m+1}-1}{p(p^m-1)}=1+\frac{p-1}{p(p^m-1)}=1+O\left(\frac{1}{p^m}\right),$$
so
\begin{eqnarray*}
& &\hskip-20pt\sum_{p^m\leq x}\frac{1}{p^m}\log\frac{p^{m+1}-1}{p(p^m-1)}=\sum_{p^m\leq x}\frac{1}{p^m}\log\frac{1-1/p^{m+1}}{1-1/p^m}\\
& &\hskip-20pt=-\frac{1}{p}\log\left(1-\frac{1}{p}\right)+
\left(\frac{1}{p}-\frac{1}{p^2}\right)\log\left(1-\frac{1}{p^2}\right)
+\cdots+O\left(\frac{1}{x^2}\right)\\
& &\hskip-20pt=\left(1-\frac{1}{p}\right)\left(\frac{1}{p}\log\left(1+\frac{1}{p}\right)+
\frac{1}{p^2}\log\left(1+\frac{1}{p}+\frac{1}{p^2}\right)+\cdots\right)+O\left(\frac{1}{x^2}\right)
\end{eqnarray*}
and the result follows.
\end{proof}

\medskip\noindent
Theorems \ref{thm:all} and \ref{thm:even} are our main asymptotic results on
the growth of aliquot sequences. Roughly speaking, they state that the growth
factor diverges to 0 when considered over all starting values, whereas confined
to even values it converges, to $\lambda$.
\begin{theorem}\label{thm:all}
\begin{equation*}
{1\over N}\sum_{n=1}^N \log\frac{s(n)}{n}=-e^{-\gamma}\log\log N+O(\log\log\log N).
\end{equation*}
\end{theorem}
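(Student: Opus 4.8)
The plan is to compare $\log\frac{s(n)}{n}$ with $-\log\frac{n}{\sigma(n)}$, exploiting the identity $s(n)=\sigma(n)-n$. Write $\frac{s(n)}{n}=\frac{\sigma(n)}{n}-1=\frac{\sigma(n)}{n}\left(1-\frac{n}{\sigma(n)}\right)$, so that
\begin{equation*}
\log\frac{s(n)}{n}=\log\frac{\sigma(n)}{n}+\log\left(1-\frac{n}{\sigma(n)}\right).
\end{equation*}
The first summand, averaged over $n\le N$, converges to the constant $A$ of Proposition~\ref{one}, contributing only $O(1)$ to the sum, hence $o(\log\log N)$. So the entire growth of the average comes from the second term $\log\left(1-\frac{n}{\sigma(n)}\right)$, which is large and negative precisely when $\sigma(n)/n$ is close to $1$, i.e.\ when $n$ has few small prime factors.

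First I would make this quantitative. For $n$ with least prime factor $p$, one has $\frac{\sigma(n)}{n}\ge 1+\frac1p$, so $\frac{n}{\sigma(n)}\le\frac{p}{p+1}$ and $\log\left(1-\frac{n}{\sigma(n)}\right)\ge -\log(p+1)$; in the other direction, restricting to $n$ that are prime gives $\log\left(1-\frac1{\sigma(n)}\right)$, essentially $\approx -\log n$, so individual terms really can be as negative as $-\log N$. The point is that the \emph{average} is governed by the distribution of the least prime factor, or more precisely by how often $n\le N$ is free of prime factors $\le y$. By the fundamental lemma of sieve theory (Mertens/Buchstab), the density of $n\le N$ with no prime factor below $y$ is asymptotically $\frac{e^{-\gamma}}{\log y}$ for $y=N^{o(1)}$. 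Summing the contribution $-\log(\text{least prime factor})$ against this density, one gets a main term of the shape $-e^{-\gamma}\int^{\log\log N}dt$, producing the $-e^{-\gamma}\log\log N$.

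Concretely, I would organize the computation as follows. Split $\log\left(1-\frac{n}{\sigma(n)}\right)=-\sum_{k\ge1}\frac1k\left(\frac{n}{\sigma(n)}\right)^k$ and handle $k=1$ separately from $k\ge2$ (the tail $k\ge2$ contributes a bounded average since $\frac{n}{\sigma(n)}\le\frac23$ for even $n$ and is controlled for odd $n$ via the perfect-number digression already in the paper). For the main term $-\sum_{n\le N}\frac{n}{\sigma(n)}$, interchange and write $\frac{n}{\sigma(n)}=\prod_{p\mid n}\frac{1}{1+1/p+\cdots}$; then one classifies $n$ by its set of small prime divisors. For $n$ divisible only by primes $>y$, $\frac{n}{\sigma(n)}=1+O(1/y)$, and the count of such $n\le N$ is $\frac{e^{-\gamma}N}{\log y}(1+o(1))$. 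Letting $y$ run and using partial summation over $y$ from $2$ up to roughly $\log N$ (beyond which the contribution is negligible, since $\frac{n}{\sigma(n)}$ is bounded away from $1$) yields
\begin{equation*}
\frac1N\sum_{n\le N}\frac{n}{\sigma(n)}=e^{-\gamma}\log\log N+O(\log\log\log N),
\end{equation*}
with the triple-log error coming from the error term in the fundamental lemma together with the error in replacing $\frac{n}{\sigma(n)}$ by $1$. Combining the pieces gives the theorem.

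The main obstacle is the uniformity in the sieve estimate: the fundamental lemma gives $\#\{n\le N:\ p\mid n\Rightarrow p>y\}=\frac{e^{-\gamma}N}{\log y}\bigl(1+O(\text{something small})\bigr)$ only in a limited range of $y$, and the error term degrades as $y$ grows. Tracking this error as $y$ ranges up to a power of $\log N$, and confirming that it accumulates to no worse than $O(\log\log\log N)$ after the partial summation, is the delicate part; everything else (the $k\ge2$ tail, the $\log\frac{\sigma(n)}{n}$ piece, the contribution of squares and perfect-number multiples) is routine and bounded.
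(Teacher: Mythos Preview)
Your decomposition $\log\frac{s(n)}{n}=\log\frac{\sigma(n)}{n}+\log\bigl(1-\frac{n}{\sigma(n)}\bigr)$ is fine, and the first piece is indeed $O(1)$ on average by Proposition~\ref{one}. But the power-series step goes wrong. You assert that the tail $k\ge2$ of $-\sum_{k\ge1}\frac{1}{k}\bigl(\frac{n}{\sigma(n)}\bigr)^k$ contributes a bounded average, and that the $k=1$ term carries the divergence, culminating in
\[
\frac{1}{N}\sum_{n\le N}\frac{n}{\sigma(n)}=e^{-\gamma}\log\log N+O(\log\log\log N).
\]
This is impossible: $\frac{n}{\sigma(n)}<1$ for every $n>1$, so the left side is strictly less than $1$. (Indeed $\sum_{n\le N}\frac{n}{\sigma(n)}\sim cN$ for a positive constant $c$.) The situation is exactly the reverse of what you claim: the $k=1$ term has bounded average, and the entire $\log\log N$ lives in the tail $k\ge2$. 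For an odd $n$ whose least prime factor is $p$, one has $\frac{n}{\sigma(n)}$ close to $1-\frac{1}{p}$, so $\sum_{k\ge2}\frac{1}{k}\bigl(\frac{n}{\sigma(n)}\bigr)^k\approx\log p-1$, which is unbounded. Your justification for odd $n$ (``the perfect-number digression'') does not apply; there is no uniform bound $\frac{n}{\sigma(n)}\le c<1$ on the odd integers.

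The paper avoids the power-series expansion entirely. For the lower bound it uses the elementary inequality $\frac{s(n)}{n}\ge\frac{1}{p_1(n)}$, so $\log\frac{s(n)}{n}\ge-\log p_1(n)$, and then sums $-\log p$ times the sieve count of $n\le x$ with least prime factor $p$; this produces $-e^{-\gamma}x\sum_{p\le x}\frac{1}{p}\sim -e^{-\gamma}x\log\log x$ directly. For the matching upper bound it restricts to $n=mp$ with all prime factors of $m$ exceeding $p\log x$ (not merely $p$, which is needed to force $\frac{\sigma(n)}{n}-1\approx\frac{1}{p}$ uniformly), and again counts via the sieve. If you want to salvage your outline, drop the $k=1$/$k\ge2$ split and instead estimate $\log\bigl(1-\frac{n}{\sigma(n)}\bigr)$ as a whole by comparing it to $-\log p_1(n)$; the partial-summation idea over the least prime factor then becomes essentially the paper's argument.
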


\medskip\noindent
\begin{proof}
As $s(n)\geq 1$ for $n>1$ we have $$\frac{s(n)}{n}\geq\frac{1}{p_1}$$
for the smallest prime factor $p_1$ of $n$. Thus
\begin{multline*}
\sum_{1<n\leq x}\log\frac{s(n)}{n}\geq -\sum_{1\leq n\leq x}\log p_1(n)
\geq -\sum_{p\leq x}\log p\sum_{n\leq x\atop q\vert \frac{n}{p}\Rightarrow q\geq p}1\\
\geq - \sum_{p\leq x}\log p\cdot \#\{ m\leq \frac{x}{p} : \mbox{$m$ has its prime factors $\geq p$} \}.
\end{multline*}
By the small sieve: if $p=x^{\frac{1}{u}}$ then
\begin{eqnarray*}
\#\{ m\leq \frac{x}{p} : q\vert m\Rightarrow q\geq p\}&=&
\prod_{q<p}\left(1-\frac{1}{q}\right)\frac{x}{p}\left(1+O(e^{-2u})+\frac{1}{\log x}\right)\\
&=&\frac{e^{-\gamma}x}{p\log p}\left(1+O\left(\frac{1}{\log p}+e^{-2u}\right)\right).
\end{eqnarray*}
Hence
\begin{eqnarray}\label{oneway}
\sum_{1<n\leq x}\log\frac{s(n)}{n}&\geq&-e^{-\gamma}x\sum_{p\leq x}\frac{1}{p}+O\left(x\sum_{p\leq x}\frac{1}{p\log p}+x\sum_{p\leq x}\frac{1}{p}e^{-u}\right)\nonumber\\
&\geq&-e^{-\gamma}x\log\log x+O(x).
\end{eqnarray}
On the other hand, let $M$ be the set of integers of the form $mp\leq x$ where all
prime factors of $m$ are $\geq p\log x$. For such integers we have
$\omega(m)\leq \log(x)/\log(p\log x)$,
$$
\frac{{\sigma(mp)/mp}}{{\sigma(p)/p}}\leq \prod_{q\vert m}\left(1-\frac{1}{q}\right)^{-1}\hskip-7pt\leq \left(1-\frac{1}{p\log x}\right)^{-\omega(m)}\hskip-7pt
\leq 1+O\left(\frac{1}{p\log\log x}\right),
$$
so
\begin{eqnarray*}
\frac{\sigma(mp)}{mp}-1\leq\frac{1}{p}\left(1+O\left(\frac{1}{\log\log x}\right)\right),
\end{eqnarray*}
Then
\begin{eqnarray}\label{otherway}
\sum_{1<n\leq x}\log\frac{s(n)}{n}&=&
\sum_{1<n\leq x}\log\left(\frac{\sigma(n)}{n}-1\right)\nonumber\\
&\leq&\sum_{1<n\leq x}\log\frac{\sigma(n)}{n}-\sum_{n\in M}\left(\log p+O\left(\frac{1}{\log\log p}\right)\right).
\end{eqnarray}
Let $M'$ be the set of integers $mp\in M$ with $p\leq x^{\frac{1}{U}}$
where $U$ is a large fixed number. With $y=e^{(\log\log x)^2}$ we find:
\begin{eqnarray}\label{otherwaysub}
\sum_{n\in M}\log p&\geq & \hskip-3pt\sum_{n\in M'}\log p\geq
\hskip-3pt\sum_{y\leq p\leq x^{\frac{1}{U}}}\hskip-8pt \log p\cdot\#\left\{ m\leq \frac{x}{p}: q\vert m\Rightarrow q>p\log x\right\}\nonumber\\
&\geq&\hskip-3pt\sum_{y\leq p\leq x^{\frac{1}{U}}} \log p\cdot\prod_{q\leq p\log x}\left(1-\frac{1}{q}\right)\frac{x}{p}\left(1+O\left(e^{-U}+\frac{1}{\log x}\right)\right)\nonumber\\
&\geq &\sum_{y\leq p\leq x^{\frac{1}{U}}}\frac{e^{-\gamma}x}{p}\left(1+O\left(e^{-U}+\frac{1}{\log x}\right)\right)\nonumber\\
&\geq &e^{-\gamma}x \left(\log\log x+O(\log\log\log x)\right).
\end{eqnarray}
Combining (\ref{otherwaysub}) and (\ref{otherway}) with (\ref{oneway}) and Proposition (\ref{one}) we obtain
\begin{equation*}\label{two}
\sum_{1\leq n\leq x}\log\frac{s(n)}{n}=-e^{-\gamma}x\log\log x+O(x\log\log\log x).
\end{equation*}
\end{proof}

\begin{theorem}\label{thm:even}
\begin{equation*}
{1\over N}\sum_{n=1}^{N} \log\frac{s(2n)}{2n}=\lambda+ O(1/\log N)
\end{equation*}
where
\begin{equation}
\lambda=\alpha(2)+\sum_{p {\rm \ prime}}\alpha(p) - \sum_{j\geq 1}\frac{1}{j}\left((2\beta_j(2)-1)
\prod_{p\geq 3\atop {\rm prime}}\beta_j(p)\right),\end{equation}
with
\begin{equation}
\alpha(p)=\left(1-{1\over p}\right)\sum_{m\geq 1}{1\over p^m}\log\left(1+{1\over p}+\cdots+
{1\over p^m}\right),\end{equation}
as before, and
\begin{equation}
\beta_j(p)=\left(1-{1\over p}\right)\sum_{m\geq 0}{1\over p^m}\left(1+{1\over p}+\cdots+
{1\over p^m}\right)^{-j}\end{equation}
\end{theorem}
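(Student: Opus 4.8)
The plan is to split, for each $n$,
$$\log\frac{s(2n)}{2n}=\log\!\left(\frac{\sigma(2n)}{2n}-1\right)=\log\frac{\sigma(2n)}{2n}+\log\!\left(1-\frac{2n}{\sigma(2n)}\right),$$
and to treat the two summands separately. For the first, I would rerun the telescoping computation of Proposition \ref{one} with $n$ replaced by $2n$: the identity $\log\frac{\sigma(m)}{m}=\sum_{p^a\mid m}\log\frac{p^{a+1}-1}{p(p^a-1)}$ still holds, and in summing over $m=2n\le 2N$ the only change is that $2$ divides every such $m$, so the relevant count becomes $\#\{n\le N:2^a\mid 2n\}=N/2^{a-1}+O(1)$ in place of $N/2^a+O(1)$; this turns the local contribution $\alpha(2)$ at the prime $2$ into $2\alpha(2)$ and leaves every odd prime untouched. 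The same truncation used in Proposition \ref{one} then yields
$$\frac1N\sum_{n\le N}\log\frac{\sigma(2n)}{2n}=2\alpha(2)+\sum_{p\ge3}\alpha(p)+O\!\left(\tfrac1{\log N}\right)=\alpha(2)+\sum_{p}\alpha(p)+O\!\left(\tfrac1{\log N}\right),$$
which is exactly the first two terms of $\lambda$.

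For the second summand the key input is \eqref{prop:oneandahalf}: since $\sigma(2n)/2n\ge\frac32$ we have $t_n:=2n/\sigma(2n)\le\frac23$ for every $n$, so $\log(1-t_n)=-\sum_{j\ge1}t_n^{\,j}/j$ with the series dominated by $\sum_{j\ge1}\frac1j(2/3)^j$ uniformly in $n$. Summing over $n\le N$, dividing by $N$, and interchanging the two sums, it suffices to prove that for each fixed $j$
$$\frac1N\sum_{n\le N}\left(\frac{2n}{\sigma(2n)}\right)^{\!j}\;\longrightarrow\;\bigl(2\beta_j(2)-1\bigr)\prod_{p\ge3}\beta_j(p)\qquad(N\to\infty).$$
The function $m\mapsto f_j(m):=(\sigma(m)/m)^{-j}=\prod_{p^a\|m}(1+\tfrac1p+\cdots+\tfrac1{p^a})^{-j}$ is multiplicative, satisfies $0<f_j\le1$, and has $f_j(p)=(1+1/p)^{-j}=1+O(1/p)$, so $\sum_p\bigl(1-f_j(p)\bigr)/p<\infty$ and $f_j$ has a mean value; writing $f_j$ as the Dirichlet convolution of the constant $1$ with a small multiplicative function $g_j$, one obtains $\sum_{m\le X}f_j(m)=X\prod_p\beta_j(p)+O_j(X^{\varepsilon})$ and, restricting to odd $m$, $\sum_{m\le X,\,m\ \mathrm{odd}}f_j(m)=\tfrac X2\prod_{p\ge3}\beta_j(p)+O_j(X^{\varepsilon})$. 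Decomposing each even argument as $2n=2^a m$ with $a\ge1$ and $m$ odd (so that $2^{a-1}m=n\le N$) then gives $\frac1N\sum_{n\le N}f_j(2n)\to\bigl(\sum_{a\ge1}f_j(2^a)/2^a\bigr)\prod_{p\ge3}\beta_j(p)$, and since $\beta_j(2)=(1-\tfrac12)\sum_{a\ge0}f_j(2^a)/2^a$ by its very definition, $\sum_{a\ge1}f_j(2^a)/2^a=2\beta_j(2)-1$. Putting the two summands together yields $\frac1N\sum_{n\le N}\log\frac{s(2n)}{2n}\to\lambda$.

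The main obstacle is the error term. For each $j$ the Dirichlet-hyperbola estimate above refines to $\frac1N\sum_{n\le N}f_j(2n)=(2\beta_j(2)-1)\prod_{p\ge3}\beta_j(p)+O_j(N^{\varepsilon-1})$, but the implied constant grows roughly like $C^{j}$ (it is controlled by a quantity of size $e^{O(j)}$, namely $\sum_d|g_j(d)|d^{-\varepsilon}$), so the individual estimates cannot simply be summed over all $j$. I would remedy this by truncating the $j$-sum at $J=\delta\log N$ for a small fixed $\delta>0$: for $j\le J$ the accumulated error is $O(C^{J}N^{\varepsilon-1}\log J)=O(N^{-\eta})$ for a suitable $\eta>0$, while for $j>J$ both the genuine tail $\sum_{j>J}\frac1j(2\beta_j(2)-1)\prod_{p\ge3}\beta_j(p)$ and its trivial majorant $\sum_{j>J}\frac1j(2/3)^j$ are $O\bigl((2/3)^{J}\bigr)=O(N^{-\eta'})$; here one uses $0\le 2\beta_j(2)-1=\sum_{a\ge1}f_j(2^a)/2^a\le(2/3)^j$ (because $1+\tfrac12+\cdots+\tfrac1{2^a}\ge\tfrac32$ for $a\ge1$) and $0\le\beta_j(p)\le1$. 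The same truncation also disposes of the error incurred in interchanging $\sum_j$ with $\sum_n$ at the outset. All of these contributions are $O(1/\log N)$, which together with the $O(1/\log N)$ inherited from the first summand gives the stated error term.
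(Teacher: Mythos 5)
Your proof proposal is correct and follows essentially the same route as the paper: split $\log\frac{s(2n)}{2n}=\log\frac{\sigma(2n)}{2n}+\log\bigl(1-\frac{2n}{\sigma(2n)}\bigr)$, rerun the Proposition~\ref{one} telescoping for the first term (getting the doubled local factor $2\alpha(2)=\alpha(2)+\alpha(2)$ at $p=2$), expand the second in powers of $2n/\sigma(2n)$ using the uniform bound $2n/\sigma(2n)\le 2/3$, and evaluate the mean of each power via a Wirsing-type mean value theorem for multiplicative functions. The only cosmetic difference is that you decompose $2n=2^a m$ with $m$ odd and sum over odd integers, whereas the paper renormalizes to the multiplicative function $f_j(n)=\bigl(3n/\sigma(2n)\bigr)^j$ and applies its lemma directly to $f_j$; your handling of the $j$-dependence of the error (truncating at $J\asymp\log N$ and noting the implied constants grow like $e^{O(j)}$) is in fact more explicit than the paper's, which leaves this accounting implicit.
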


\medskip\noindent
\begin{proof}
Suppose that $J$ is sufficiently large. Then
\begin{eqnarray*}
\sum_{n\leq x}\log\left(\frac{\sigma(2n)}{2n}-1\right)=\sum_{n\leq x}\log\frac{\sigma(2n)}{2n}+\sum_{n\leq x}\log\left(1-\frac{2n}{\sigma(2n)}\right)\\
=\sum_{n\leq x}\log\frac{\sigma(2n)}{2n}-\sum_{j=1}^J\frac{1}{j}\sum_{n\leq x}\left(\frac{2n}{\sigma(2n)}\right)^j+O\left(\frac{x}{(3/2)^J}\right),
\end{eqnarray*}
using (\ref{prop:oneandahalf}).
Proceeding as before we get
\begin{equation}\label{four}
\sum_{n\leq x}\log\frac{\sigma(2n)}{2n}=A^*x+O\left(\frac{x}{\log x}\right),
\end{equation}
where
$$
A^*=\sum_{p}\alpha(p)+\sum_{m\geq 1}\frac{1}{2^{m+1}}\left(\log 2+\log\left(1-\frac{1}{2^{m+1}}\right)\right).
$$
We now use the following result.
\begin{lemma}
Let $f$ be a multiplicative function with
$0\leq f(p^m)\leq 1$ with $1-f(p^m)\ll\frac{1}{p}$ for every prime $p$.
Then
\begin{equation}\label{five}
\sum_{n\leq x} f(n)=x\prod_{p}\left[\left(1-\frac{1}{p}\right)\sum_{m\geq 0}\frac{f(p^m)}{p^m}\right]+O((\log x)^C).
\end{equation}
\end{lemma}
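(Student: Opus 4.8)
\noindent
The plan is to prove the Lemma by the convolution method. Write $f=\mathbf 1\ast g$ (equivalently $g=\mu\ast f$), so that $g$ is multiplicative with $g(1)=1$ and, comparing Euler factors at a prime $p$, $g(p^m)=f(p^m)-f(p^{m-1})$ for $m\ge 1$ (with $f(p^{0})=f(1)=1$). From the hypotheses, $|g(p^m)|\le(1-f(p^m))+(1-f(p^{m-1}))\ll 1/p$ for every $m\ge 1$; hence $\sum_{m\ge1}|g(p^m)|/p^m\ll 1/p^{2}$, so each local factor $\sum_{m\ge0}g(p^m)/p^m=1+O(1/p^{2})$, the product $\prod_{p}\sum_{m\ge0}g(p^m)/p^m$ converges absolutely, and $\sum_{d\ge1}|g(d)|/d<\infty$. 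An elementary telescoping, $(1-1/p)\sum_{m\ge0}f(p^m)/p^m=\sum_{m\ge0}f(p^m)/p^m-\sum_{m\ge1}f(p^{m-1})/p^m=1+\sum_{m\ge1}g(p^m)/p^m=\sum_{m\ge0}g(p^m)/p^m$, identifies $\sum_{d\ge1}g(d)/d$ with the Euler product appearing in \eqref{five}.

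\noindent
Next I would expand $\sum_{n\le x}f(n)=\sum_{n\le x}\sum_{d\mid n}g(d)=\sum_{d\le x}g(d)\lfloor x/d\rfloor=x\sum_{d\le x}g(d)/d+O\big(\sum_{d\le x}|g(d)|\big)$, and then complete the sum: $x\sum_{d\le x}g(d)/d=x\prod_{p}\big[(1-1/p)\sum_{m\ge0}f(p^m)/p^m\big]-x\sum_{d>x}g(d)/d$. This reduces the Lemma to the two bounds $\sum_{d\le x}|g(d)|\ll(\log x)^{C}$ and $x\sum_{d>x}|g(d)|/d\ll(\log x)^{C}$ (the latter controlling $-x\sum_{d>x}g(d)/d$ by the triangle inequality). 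By partial summation the second follows from the first: writing $G(t):=\sum_{d\le t}|g(d)|\ll(\log t)^{C}$ one gets $\sum_{d>x}|g(d)|/d=-G(x)/x+\int_{x}^{\infty}G(t)t^{-2}\,dt\ll(\log x)^{C}/x$. So everything comes down to bounding $\sum_{d\le x}|g(d)|$.

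\noindent
For that, since every $d\le x$ is built from primes $\le x$ and $|g|$ is multiplicative, $\sum_{d\le x}|g(d)|\le\prod_{p\le x}\big(1+\sum_{m\ge1}|g(p^m)|\big)$. The crux is the prime-power estimate $\sum_{m\ge1}|g(p^m)|=\sum_{m\ge1}|f(p^m)-f(p^{m-1})|\ll 1/p$: the raw hypothesis $1-f(p^m)\ll 1/p$ only bounds each individual term by $O(1/p)$, which is not summable over $m$, so one must control the total variation of $m\mapsto f(p^m)$. This is where one uses that for every $f$ to which the Lemma is applied in this paper --- namely $n\mapsto(n/\sigma(n))^{j}$ and the analogous functions modified at $p=2$ --- the sequence $f(p^m)$ is monotone decreasing in $m$, so the sum telescopes to $1-\lim_{m\to\infty}f(p^m)\ll 1/p$ (using again $1-f(p^m)\ll 1/p$). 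Granting this, $\sum_{d\le x}|g(d)|\le\prod_{p\le x}\big(1+O(1/p)\big)\ll(\log x)^{C}$ by Mertens' theorem, with $C$ fixed by the implied constant in $1-f(p^m)\ll 1/p$, and the Lemma follows. The monotonicity input is the only non-routine point; every other estimate above is standard.
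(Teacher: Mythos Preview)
Your argument is correct for the applications but takes a genuinely different route from the paper, and---as you yourself flag---does not quite establish the Lemma from its stated hypotheses alone. You write $f=\mathbf 1\ast g$ via ordinary Dirichlet convolution, so $g(p^m)=f(p^m)-f(p^{m-1})$, and then need $\sum_{m\ge1}|g(p^m)|\ll 1/p$; the hypothesis $1-f(p^m)\ll 1/p$ bounds each difference but not their sum, so you must import monotonicity of $m\mapsto f(p^m)$ from the particular $f_j$ used later. The paper sidesteps this by a \emph{unitary} convolution: it sets $g(p^m):=f(p^m)-1$, expands $f(n)=\prod_{p^m\parallel n}\bigl(1+g(p^m)\bigr)$ as a sum over unitary divisors of $n$, and obtains $\sum_{n\le x}f(n)=\sum_{d\le x}g(d)\,\#\{m\le x/d:\gcd(m,d)=1\}$. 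Now $|g(p^m)|\ll 1/p$ is \emph{immediate} from the hypothesis, with no total-variation input needed; the coprime count supplies a factor $\phi(d)/d$, and the identity $1+(1-\tfrac1p)\sum_{m\ge1}(f(p^m)-1)p^{-m}=(1-\tfrac1p)\sum_{m\ge0}f(p^m)p^{-m}$ recovers the same Euler product you found by telescoping. Thus the paper's decomposition matches the stated hypotheses, while yours trades that for the (here harmless) monotonicity assumption; everything else in your write-up---the Euler-product identification, the expansion $\sum_{d\le x}g(d)\lfloor x/d\rfloor$, and the partial-summation reduction to bounding $\sum_{d\le x}|g(d)|$---is correct and standard.
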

\begin{proof}
Let $g(p^m)=f(p^m)-1\ll\frac{1}{p}$, and $g(n)=\prod_{p^m\vert\vert n}g(p^m)$.
Then
\begin{eqnarray*}
\sum_{n\leq x}f(n)&=&\sum_{n\leq x}\prod_{p^m\vert\vert n}(1+g(p^m))=\\
&=&\sum_{n\leq x}\sum_{d\vert n\atop\gcd(d, n/d)=1}g(d)=\sum_{d\leq x}g(d)\sum_{m\leq x/d\atop \gcd(m,d)=1}1=\\
&=&\sum_{d\leq x}g(d)\frac{\phi(d)}{d}\frac{x}{d}+O(\sum_{d\leq x}g(d)2^{\nu(d)})=\\
&=&x\prod_p\left(1+\sum_{m\geq 1}\frac{g(p^m)}{p^m}\frac{\phi(p^m)}{p^m}\right)+O((\log x)^C).
\end{eqnarray*}
But
\begin{eqnarray*}
1+\sum_{m\geq 1}\frac{g(p^m)}{p^m}\frac{\phi(p^m)}{p^m}&=&1+\left(1-\frac{1}{p}\right)\sum_{m\geq1}\frac{f(p^m)-1}{p^m}=\\
&=&\left(1-\frac{1}{p}\right)\sum_{m\geq0}\frac{f(p^m)}{p^m},
\end{eqnarray*}
and the Lemma follows.
\end{proof}

\medskip\noindent
We apply this to $f_j(n)=\left(\frac{3n}{\sigma(2n)}\right)^j$; then
$$f_j(p^m)=\left(\frac{p^m}{\sigma(p^m)}\right)^j,$$
for $p$ an odd prime, and
$$f_j(2^m)=\left(\frac{3\cdot 2^m}{\sigma(2^{m+1})}\right)^j=\left(\frac{3}{2}\right)^j\left(\frac{2^{m+1}}{\sigma(2^{m+1})}\right)^j.$$
But then by the Lemma
\begin{eqnarray*}
& &\frac{1}{j}\sum_{n\leq x}\left(\frac{2n}{\sigma(2n)}\right)^j=
\frac{1}{j}\frac{1}{(3/2)^j}\sum_{n\leq x}f_j(n)=\\
&=&\frac{x}{j}\frac{1}{(3/2)^j}\prod_p\left\{\left(1-\frac{1}{p}\right)\left(\sum_{m\geq 0}\frac{f_j(p^m)}{p^m}\right)\right\}+O((\log x)^c)=\\
&=&\frac{x}{j}(2\beta_j(2)-1)\left(\prod_{p\geq3\atop\textrm{prime}}\beta_j(p)\right)+O((\log x)^c).
\end{eqnarray*}

\medskip\noindent
Collecting the information we get
\begin{equation*}\label{seven}
\sum_{n\leq x}\log\left(\frac{\sigma(2n)}{2n}-1\right)= \lambda x+O\left(\frac{1}{\log x}\right),
\end{equation*}
and the result follows.
\end{proof}
\begin{example}\rm
Although most of the rest of this paper is devoted to a numerical
estimate for $\lambda$, necessary because of the behaviour of $\beta$,
it is easy to {\it see} the convergence of $\lambda$. If we
sum $s(n)/n$ for the first even values of $n$, up to $N$, we get the following.
\begin{table}[h]
\begin{tabular}{|c|c|c|}\hline
$N$ & $\sum_{2n\leq N}s(2n)/(2n)$ \\\hline
$10^2$ & $-0.0567457527...$ \\
$10^3$ & $-0.0356519058...$ \\
$10^4$ & $-0.0335201796...$ \\
$10^5$ & $-0.0332873082...$ \\
$10^6$ & $-0.0332626444...$ \\
$10^7$ & $-0.0332598642...$ \\
$10^8$ & $-0.0332595156...$ \\
$3.953\cdot10^9$ & $-0.0332597045...$ \\\hline
\end{tabular}
\end{table}
\end{example}
\section{Computing $\alpha$}
\noindent
By definition,
$$\lambda=2\alpha(2)+\sum_{p\geq 3\atop \mbox{\tiny\ \ prime}}\alpha(p)-\sum_{j\geq 1}\frac{1}{j}\left((2\beta(2)-1)\cdot\prod_{p\geq 3 \atop\mbox{\tiny\ \ prime}}\beta_j(p)\right).$$
In this section we will compute a good approximation and upper bound for:
$$\alpha=2\alpha(2)+\sum_{p\geq 3\atop p\ \rm{prime}}\alpha(p).$$
First note that for any prime $p$
$$
\left(1-\frac{1}{p}\right)\sum_{m\geq 1}\frac{1}{p^m}\log\left(1+\frac{1}{p}+\cdots\frac{1}{p^m}\right)=
\sum_{m\geq 1}\frac{1}{p^m}\log\left(\frac{1+p+\cdots p^m}{p+\cdots+p^m}\right).
$$
Since for all $m\geq 1$
$$
\frac{1+p+\cdots+p^m}{p+\cdots+p^m}=1+\frac{1}{p+\cdots+p^m}\leq 1 + \frac{1}{p^m}
$$
and $\log(1+x)\leq x$ for all $x>0$, we find that the tail
$$
\sum_{m=M+1}^\infty \frac{1}{p^m}\log\left(\frac{1+p+\cdots+p^m}{p+\cdots+p^m}\right)
$$
is bounded by
$$
\frac{1}{p^{M+1}}\left(1+\frac{1}{p}+\frac{1}{p^2}+\cdots\right)\cdot\log\left(1+\frac{1}{p^{M+1}}\right)\leq \frac{p}{p-1} \left(\frac{1}{p^{M+1}}\right)^2.
$$
We will denote this bound by
$$A(p, M)=\frac{p}{p-1} \left(\frac{1}{p^{M+1}}\right)^2.$$
It also follows that, for any $N\geq 1$
$$\sum_{p> N\atop p\ {\rm prime}}\alpha(p)\leq \sum_{p> N\atop p\ {\rm prime}}
\sum_{m\geq 1}\frac{1}{p^m}\log\left(1+\frac{1}{p^m}\right)\leq
\sum_{n>N}\frac{1}{n}\log\left(1+\frac{1}{n}\right).$$
But
$$\sum_{n>N}\frac{1}{n}\log\left(1+\frac{1}{n}\right)\leq \int_{N}^{\infty} \frac{1}{x}\log\left(1+ \frac{1}{x}\right)\ dx=\int_{0}^{\frac{1}{N}} \frac{1}{z}\log\left(1+ z\right)\ dz$$
is clearly bounded by $\frac{1}{N}$.
\begin{theorem}
For any $N>2$ and $L,M>1$:
\begin{eqnarray*}
\alpha&\leq& \sum_{m=1}^L \frac{1}{2^m}\log\left(1+\frac{1}{2}+\cdots+\frac{1}{2^m}\right)+2A(2,L)+\\
&+&\sum_{p\leq N\atop p\ {\rm odd prime}}\sum_{m=1}^M \frac{1}{p^m}\log\left(\frac{1+\cdots+p^m}{p+\cdots+p^m}\right)+\sum_{3\leq p\leq N\atop p\ {\rm prime}}A(p,M)+\frac{1}{N}.
\end{eqnarray*}
\end{theorem}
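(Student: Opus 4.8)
\medskip\noindent
The plan is to break the series defining $\alpha$ into three ranges and bound each with material already assembled above. Write
$$\alpha = 2\alpha(2) + \sum_{3\leq p\leq N\atop p\ {\rm prime}}\alpha(p) + \sum_{p>N\atop p\ {\rm prime}}\alpha(p).$$
By the estimate established immediately before the theorem, the last sum is at most $\frac1N$, so the task reduces to controlling the finitely many terms $\alpha(p)$ with $p\leq N$.

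\medskip\noindent
For each such prime I would first replace $\alpha(p)$ by the equivalent form
$$\alpha(p)=\sum_{m\geq 1}\frac{1}{p^m}\log\Bigl(\frac{1+p+\cdots+p^m}{p+\cdots+p^m}\Bigr)$$
given by the identity proved at the start of this section, and then truncate the inner sum: at $m=M$ for the odd primes $3\leq p\leq N$, and at $m=L$ for $p=2$. The discarded tail is precisely the quantity that was bounded above by $A(p,M)$; since that bound used only $p\geq 2$, it applies verbatim with $p=2$ and $M=L$. Summing over the odd primes $3\leq p\leq N$ thus collects the truncation errors into $\sum_{3\leq p\leq N}A(p,M)$ and the surviving terms into the double sum over $m\leq M$, while the prime $2$ contributes $2\alpha(2)\leq 2\sum_{m=1}^{L}\frac{1}{2^m}\log\bigl(\frac{1+2+\cdots+2^m}{2+\cdots+2^m}\bigr)+2A(2,L)$, the factor $2$ being the coefficient of $\alpha(2)$ in $\alpha$ and the first summand of that sum being $\frac12\log\frac32$.

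\medskip\noindent
Adding the three contributions produces exactly the claimed inequality, valid for all $N>2$ and $L,M>1$ because every step only enlarges the right-hand side; moreover the right-hand side tends to $\alpha$ as $L,M,N\to\infty$, which is what makes it usable as a computational bound. I do not expect a genuine obstacle here: all the substance already sits in the summation-by-parts identity for $\alpha(p)$, in the explicit tail bound $A(p,M)$, and in the comparison $\sum_{p>N}\alpha(p)\leq\frac1N$. The only points that need care are the bookkeeping of the factor $2$ attached to the prime $2$ and the observation that the tail estimate is uniform in $p$ down to $p=2$.
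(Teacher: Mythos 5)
Your decomposition of $\alpha$ into the piece at $p=2$, the finitely many odd primes $p\leq N$, and the tail $p>N$, with the tail controlled by the $\frac1N$ estimate derived just before the theorem and each truncation error controlled by $A(p,M)$, is exactly the argument the paper has in mind — the theorem is essentially stated as an immediate consequence of those two bounds, and you identified the correct ingredients.

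However, you gloss over a real mismatch at the prime $2$, and in fact the step where you claim the three contributions ``produce exactly the claimed inequality'' does not hold as stated. What your argument actually yields for the prime $2$ is
$$2\alpha(2)\leq 2\sum_{m=1}^{L}\frac{1}{2^m}\log\Bigl(\tfrac{1+2+\cdots+2^m}{2+\cdots+2^m}\Bigr)+2A(2,L),$$
because $A(p,M)$ was derived as a bound on the tail of $\sum_m p^{-m}\log\bigl(\tfrac{1+p+\cdots+p^m}{p+\cdots+p^m}\bigr)$, not of the first form of $\alpha(p)$. The theorem, on the other hand, writes the $p=2$ term as $\sum_{m=1}^{L}\frac{1}{2^m}\log\bigl(1+\tfrac12+\cdots+\tfrac{1}{2^m}\bigr)$. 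Setting $a_m=\log\bigl(1+\tfrac12+\cdots+\tfrac{1}{2^m}\bigr)$ and applying Abel summation to the \emph{finite} sum, one has
$$2\sum_{m=1}^{L}\frac{a_m-a_{m-1}}{2^m}=\sum_{m=1}^{L}\frac{a_m}{2^m}+\frac{a_L}{2^L},$$
so the two truncations differ by the boundary term $a_L/2^L$, and agree only in the limit $L\to\infty$ (they coincide at $m=1$, which is perhaps why the discrepancy is easy to miss). Your bound is therefore larger than the theorem's right-hand side by $a_L/2^L$. Worse, the theorem's version cannot be repaired by your method: the tail it would need to absorb is $\sum_{m>L}a_m/2^m\asymp (\log 2)/2^L$, which is far larger than $2A(2,L)=4^{-L}$. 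So what you proved is the \emph{correct} inequality; the theorem as printed appears to have dropped the boundary term (or should have carried the factor $2$ and the kernel $\log\tfrac{1+2+\cdots+2^m}{2+\cdots+2^m}$ as in your version). You should have flagged this rather than asserting exact agreement, since a reader checking your final sentence against the displayed statement will find they do not match term by term.
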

As the sums in this theorem are all finite this gives us an effective way to compute
an upper bound on $\alpha$.
\begin{example}\rm
In the table below we have listed the outcome of some computations
for $\alpha$ with $L=M=15$. These computations were done within
half an hour (including the primality tests for all primes up to $10^8$)
on an ordinary PC, using \Magma.
\begin{table}[h]
\begin{tabular}{|c|c|c|}\hline
$N$ & sums &  error bound\\\hline
$10^4$ & $0.6983072233...$ & $1.0000093132...\cdot 10^{-4}$\\
$10^5$ & $0.6983162365...$ & $1.0000931323...\cdot 10^{-5}$\\
$10^6$ & $0.6983169710...$ & $1.0009313233...\cdot 10^{-6}$\\
$10^7$ & $0.6983170329...$ & $1.0093132338...\cdot 10^{-7}$\\
$10^8$ & $0.6983170383...$ & $1.0931323384...\cdot 10^{-8}$\\\hline
\end{tabular}
\end{table}
\end{example}
\begin{corollary}
$\alpha<0.69831705$.
\end{corollary}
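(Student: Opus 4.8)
The plan is to derive the bound $\alpha<0.69831705$ directly from the preceding Theorem together with the numerical table. First I would observe that the Theorem gives, for every choice of parameters $N>2$ and $L,M>1$, an explicit finite upper bound for $\alpha$ consisting of: a finite sum over $m\leq L$ of the $2$-adic terms; the tail bound $2A(2,L)$; a finite double sum over odd primes $p\leq N$ and $m\leq M$; the tail bound $\sum_{3\leq p\leq N}A(p,M)$; and the crude remainder $\frac{1}{N}$ for primes exceeding $N$. Each of these quantities is computable to arbitrary precision, so the strategy is simply to evaluate the bound for a convenient admissible choice of $(N,L,M)$ and check that the result is below $0.69831705$.

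Next I would invoke the specific computation recorded in the Example immediately before the Corollary, namely the run with $L=M=15$ and $N=10^8$. There the finite sums evaluate to $0.6983170383\ldots$ and the combined error bound (which by the Theorem is $2A(2,L)+\sum_{3\leq p\leq N}A(p,M)+\frac{1}{N}$) is at most $1.0931323384\ldots\cdot10^{-8}$. Adding these gives
$$
\alpha\;\leq\;0.6983170383\ldots+1.0931323384\ldots\cdot10^{-8}\;<\;0.69831705,
$$
since $0.6983170383+0.0000000110=0.6983170493<0.69831705$. The point to make carefully is that the "sums" column is itself a finite rational (dyadic-times-rational) quantity computed exactly or with rigorously controlled rounding, so the displayed truncation $0.6983170383\ldots$ is a genuine upper bound once one accounts for the last digit; choosing the safe over-estimate $0.69831704$ for the sums and $1.1\cdot10^{-8}$ for the error still yields a total below $0.69831705$ with room to spare.

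The main obstacle is not mathematical depth but the verification that the numerical output is reliable: one must confirm that the primality sieve used to enumerate all primes $p\leq 10^8$ is correct (so that no odd prime is omitted from the double sum, which would make the "sums" column an underestimate of the true finite sum and hence not obviously a valid lower contribution), and that the floating-point or interval arithmetic in \Magma was carried out with enough guard digits that the printed value, suitably rounded up, dominates the exact finite sum. Granting the correctness of that computation — as the Example asserts — the Corollary follows by the single inequality displayed above. I would therefore present the proof as: apply the Theorem with $N=10^8$, $L=M=15$; substitute the values from the Example; and conclude $\alpha\leq 0.6983170383\ldots+1.094\cdot10^{-8}<0.69831705$.
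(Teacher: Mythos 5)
Your proposal is correct and matches the paper's (implicit) argument exactly: the Corollary is read off from the preceding Theorem and the $N=10^8$ row of the table, adding the computed finite sums $0.6983170383\ldots$ to the rigorous error bound $1.09\ldots\cdot 10^{-8}$ to obtain a total below $0.69831705$. Your cautionary remarks about rounding direction and the correctness of the prime enumeration are sensible hygiene but do not alter the approach.
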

\section{Computing $\beta$}
\noindent
Our next goal is to compute a lower bound for
\begin{equation}\label{def:beta}
\beta=\sum_{j\geq 1}\frac{1}{j}\left((2\beta_j(2)-1)\prod_{p> 2 \atop\mbox{\tiny\ \ prime}}\beta_j(p)\right),
\end{equation}
where for every prime $p$
$$\beta_j(p)=\left(1-\frac{1}{p}\right)\left(1+\sum_{m\geq1}\frac{1}{p^m}\left(\frac{1}{1+\frac{1}{p}+\frac{1}{p^2}+\cdots+\frac{1}{p^m}}\right)^j\right).$$
Observe that for $j\geq 1$
\begin{eqnarray*}
\beta_j(p)&=&\sum_{m\geq 0}\frac{1}{p^m}\left(\frac{p^m}{\sigma(p^m)}\right)^j-
\sum_{m\geq 0}\frac{1}{p^{m+1}}\left(\frac{p^m}{\sigma(p^m)}\right)^j\\
&=&1+\sum_{m\geq1}\frac{1}{p^m}\frac{(p^m+\cdots+p^{2m-1})^j-
(p^{m-1}+\cdots+p^{2m-1})^j}{\sigma(p^m)^j\sigma(p^{m-1})^j}\\
&=&1-\sum_{m\geq1}\left(\frac{1}{p^m}\frac{\sum_{k=1}^j {j\choose k}(p^{m-1})^k(p^m\sigma(p^{m-1}))^{j-k}}{\sigma(p^m)^j\sigma(p^{m-1})^j}\right)\\
\end{eqnarray*}
\begin{eqnarray*}
&=&1-\sum_{m\geq1}\left(\frac{1}{p^m}\sum_{k=1}^j\frac{ {j\choose k}p^{mj-k}\sigma(p^{m-1})^{j-k}}{\sigma(p^m)^j\sigma(p^{m-1})^j}\right)\\
&=&1-\sum_{m\geq1}\left(\frac{1}{p^m}\left(\frac{p^m}{\sigma(p^m)}\right)^j\sum_{k=1}^j\frac{{j\choose k}}{(p\sigma(p^{m-1}))^k}\right)\\
&=&\sum_{m\geq0}(-1)^{\nu(p^m)}g_j(p^m)h_j(p^m),
\end{eqnarray*}
where the multiplicative function $\nu(n)$ denotes the number of 
different prime divisors of $n$, and we set for non-negative $m$
$$g_j(p^m)=\frac{1}{p^m}\left(\frac{p^m}{\sigma(p^m)}\right)^j,$$
(in particular $g_j(1)=1$), and for positive $m$
$$h_j(p^m)=\sum_{k=1}^j\frac{{j\choose k}}{(p\sigma(p^{m-1}))^k}\ ,$$
while $h_j(1)=1$ by definition.

If we extend our definition to
$$g_j(n)=\frac{1}{n}\left(\frac{n}{\sigma(n)}\right)^j$$
for any positive integer $n$, we find
$$g_j(n)=\prod_{p^m\parallel n}g_j(p^m)$$
where $n=\prod_{p^m\parallel n}p^m$ is the factorization of $n$.
We also define $h_j$ for all positive integers $n$ by multiplicativity
$$h_j(n)=\prod_{p^m\parallel n}h_j(p^m).$$
If we define, for composite $n$,
\begin{equation}\label{def:betacomp}
\beta_j(n)=(-1)^{\nu(n)}g_j(n)h_j(n);
\end{equation}
then for every positive integer $n$, we find
$$\prod_{p\geq 2\atop p\ \rm prime}\beta_j(p)=\sum_{n\in\N}\beta_j(n).$$
This way, the infinite product in the definition of $\beta$, (\ref{def:beta}),
is replaced by an infinite sum; note, however, the slight complication caused
by the factor $2\beta_j(2)-1$ rather than $\beta_j(2)$ in this definition.
Since
\begin{equation}\label{beta2}
2\beta(2)-1=\sum_{m\geq1}\frac{1}{2^m}
\left(\frac{1}{1+\frac{1}{2}+\frac{1}{2^2}+\cdots+\frac{1}{2^m}}\right)^j=\sum_{m\geq1}g_j(2^m)
\end{equation}
we obtain
\begin{equation}
\beta=\sum_{j\geq1}\frac{1}{j}\left(\sum_{n\in\N\atop n\ \rm{even}\ }
\beta_j^*(n)\right),
\end{equation}
where
\begin{equation}\label{def:betastar}
\beta_j^*(n)=g_j(2^k)\beta_j(n_o)\quad\textrm{for}\quad n=2^kn_o,\textrm{ with $n_o$ odd.}
\end{equation}

To get a lower bound for $\beta$ we will replace these sums by finite summations
for bounded $j$ and $n$, and bound the remaining terms. As clearly $\beta_j(p)>0$
for odd $p$, and so is $2\beta_j(2)-1$ by \eqref{beta2}, we see from the definition
that for any $J\geq 1$
$$\beta\geq\sum_{j=1}^J
\frac{1}{j}\left(\sum_{n\in\N\atop n\ \rm{even}\ }\beta_j^*(n)\right).$$
Define for real $e>0$
$$S_{j,e}=\left\{ n\in\N:\ h_j(n)>\frac{1}{n^e} \right\}.$$
Since
\begin{eqnarray*}
\sum_{n=2^kn_o\textrm{\ even}\atop n_o> N,\ n_o\notin S_{j,e}}\beta_j^*(n)&=&
\sum_{n=2^kn_o\textrm{\ even}\atop n_o> N,\ n_o\notin S_{j,e}}\frac{1}{n}\left(\frac{n}{\sigma(n)}\right)^jh_j(n_o)\leq\\
&\leq&\sum_{n_o> N\atop\textrm{odd}}\left(\sum_{k=1}^\infty\frac{1}{2^kn_o}\left(\frac{2^kn_o}{\sigma(2^kn_o)}\right)^j\right)h_j(n_o)\leq\\
&\leq& \left(\frac{2}{3}\right)^j\frac{1}{2}\int_{x=N}^\infty\frac{1}{x^{1+e}}dx=\frac{1}{2e}\frac{1}{N^e}\left(\frac{2}{3}\right)^j,
\end{eqnarray*}
we immediately obtain the following result.

\begin{lemma}\label{lem:lbnd}
For any $J\geq 1$ and even $N_j>1$ (for $j=1,2\ldots,J$):
$$\beta \geq\left(\sum_{j=1}^J\frac{1}{j} \sum_{n\leq N_j\ \mathrm{ or }\ n\in S_{j,e}\atop n\mathrm{\ even}}\beta_j^*(n)\right)
-\sum_{j=1}^J\frac{1}{2je N_j^e}\left(\frac{2}{3}\right)^j.$$
\end{lemma}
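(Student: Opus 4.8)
The plan is to read the lemma directly off the two facts established just before it. The first is the inequality $\beta\geq\sum_{j=1}^J\frac1j\sum_{n\ \mathrm{even}}\beta_j^*(n)$, valid for every $J\geq1$: it holds because the series representation $\beta=\sum_{j\geq1}\frac1j\sum_{n\ \mathrm{even}}\beta_j^*(n)$ has nonnegative outer summands, each of which equals $\frac1j(2\beta_j(2)-1)\prod_{p>2}\beta_j(p)\geq0$ since $2\beta_j(2)-1=\sum_{m\geq1}g_j(2^m)>0$ (by \eqref{beta2}) and $\beta_j(p)>0$ for every odd prime $p$. So truncating the outer sum at $J$ only discards a nonnegative tail.

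The second is the estimate displayed immediately above the statement, which for each fixed $j$ bounds the sum of $n^{-1}(n/\sigma(n))^jh_j(n_o)$ over the relevant tail index set by $\frac1{2e\,N_j^e}(2/3)^j$. I would split the inner sum $\sum_{n\ \mathrm{even}}\beta_j^*(n)$ into the piece retained in the statement — the $n$ with $n\leq N_j$ or $n\in S_{j,e}$ — and the complementary tail, whose terms are $\beta_j^*(2^kn_o)=(-1)^{\nu(n_o)}n^{-1}(n/\sigma(n))^jh_j(n_o)$. Because $h_j$ is positive on every prime power, hence on every integer, the displayed chain of inequalities continues to hold after inserting absolute values inside the tail sum — all that is lost is the sign $(-1)^{\nu(n_o)}$ — so the signed tail is bounded in absolute value by $\frac1{2e\,N_j^e}(2/3)^j$, and in particular is $\geq-\frac1{2e\,N_j^e}(2/3)^j$. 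Multiplying by $\frac1j$, summing over $j=1,\dots,J$, and combining with the truncation inequality gives precisely the asserted bound.

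The only step that is more than bookkeeping is making sure the absolute-value bound is what gets used: $\beta_j^*$ genuinely oscillates in sign with $\nu(n_o)$, so the tail is a difference of positive and negative contributions and a one-sided bound would not suffice. The remaining ingredients behind the displayed estimate — absolute convergence of the sums and Euler products involved, $n/\sigma(n)\le1$, the geometric summation $\sum_{k\ge1}2^{-k}(2^k/\sigma(2^k))^j\ll(2/3)^j$, the defining inequality $h_j(n_o)\le n_o^{-e}$ for $n_o\notin S_{j,e}$, and the integral comparison $\sum_{n_o>N_j}n_o^{-1-e}\le\int_{N_j}^\infty x^{-1-e}\,dx$ — are all routine and already spelled out in the paragraphs preceding the statement.
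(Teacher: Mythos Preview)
Your proposal is correct and follows exactly the paper's approach: truncate the outer sum at $J$ (valid since each summand $\frac{1}{j}(2\beta_j(2)-1)\prod_{p>2}\beta_j(p)$ is nonnegative), then split each inner sum into the retained piece and the tail, and control the tail by the displayed estimate preceding the lemma. You are in fact slightly more careful than the paper: the paper's displayed chain begins with an equality that silently drops the sign $(-1)^{\nu(n_o)}$, whereas you correctly note that the bound is really on $\sum|\beta_j^*(n)|$ and that this two-sided control is what is actually needed to obtain the lower bound on the signed tail.
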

\noindent
Define for $e,c>0$
$$
T_j^{e,c} = \left\{ (p,m) : h_j(p^m) \geq  \frac{1}{ c\cdot (p^m)^{e} },\ \textrm{$p$ prime, $m\in\Z$ with $m\geq 1$} \right\}
$$
Now $T_j^{e,c}$ is finite whenever $0<e<1$ (let $p^m\to\infty$ in the definition). Put
$$
M_{j,e}=\max_{n\in\Z_{\geq1}} h_j(n)n^{e};
$$
note that $M_{j,e}\geq 1$.
\begin{lemma} Let $0< e< 1$. If $n\in S_{j,e}$ then for $m\geq 1$
$$p^m\parallel n\quad\Rightarrow\quad (p, m)\in T_j^{e, M_{j,e}};$$
in particular, $S_{j,e}$ is finite.
\end{lemma}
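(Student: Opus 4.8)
The plan is to use the multiplicativity of $h_j$ together with the fact that, by construction, $M_{j,e}$ is a finite upper bound for $h_j(n)\,n^{e}$ valid simultaneously for \emph{every} positive integer $n$. I would start by putting the two conditions into a uniform shape: $n\in S_{j,e}$ says precisely that $h_j(n)\,n^{e}>1$, while $(p,m)\in T_j^{e,c}$ says precisely that $h_j(p^{m})\,(p^{m})^{e}\ge 1/c$. Since $h_j$ is multiplicative and $n^{e}=\prod_{q^{a}\parallel n}(q^{a})^{e}$, for any prime power $p^{m}\parallel n$ one can peel off that single factor and write $h_j(n)\,n^{e}=\bigl(h_j(p^{m})(p^{m})^{e}\bigr)\cdot\bigl(h_j(n')(n')^{e}\bigr)$, where $n'=n/p^{m}$ is again a positive integer.

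Then comes the one substantive step. Assuming $n\in S_{j,e}$ and $p^{m}\parallel n$, the factorization above together with $h_j(n)\,n^{e}>1$ gives $h_j(p^{m})(p^{m})^{e}>1/\bigl(h_j(n')(n')^{e}\bigr)$; and since $h_j(n')(n')^{e}\le M_{j,e}$ by the very definition of $M_{j,e}$, this yields $h_j(p^{m})(p^{m})^{e}>1/M_{j,e}$, i.e.\ $(p,m)\in T_j^{e,M_{j,e}}$, which is the first assertion. For the ``in particular'' clause I would invoke the observation, already made just above, that $T_j^{e,c}$ is finite whenever $0<e<1$: thus $T_j^{e,M_{j,e}}$ is a finite set of pairs, so only finitely many primes $p$ occur in it, and for each such $p$ the set of admissible exponents is finite, with a maximum $m_p$. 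By the first part, every $n\in S_{j,e}$ is composed only of such primes $p$ and only to exponents $\le m_p$, hence divides the fixed integer $\prod_{p}p^{m_p}$; therefore $S_{j,e}$ is finite.

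I do not expect a genuine obstacle: once the multiplicative splitting is written down the argument is essentially one line. The one point that must be handled with care is that we bound the complementary factor $h_j(n')(n')^{e}$ by $M_{j,e}$ even though $n'$ might itself lie in $S_{j,e}$; this is legitimate and non-circular precisely because $M_{j,e}$ is by construction a bound uniform over all integers, not something extracted from the conclusion being proved. The only inputs drawn from outside the lemma are that $M_{j,e}$ is finite (and $\ge 1$) and that $T_j^{e,c}$ is finite for $0<e<1$, both of which are recorded in the paragraph preceding the statement.
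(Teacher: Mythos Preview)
Your proof is correct and follows essentially the same route as the paper: both exploit the multiplicativity of $h_j(n)\,n^{e}$ together with the uniform bound $M_{j,e}$ to force every prime-power factor of $n$ into $T_j^{e,M_{j,e}}$. The only cosmetic difference is that you argue directly, peeling off a single prime power $p^{m}\parallel n$ and bounding the cofactor $h_j(n')(n')^{e}$ by $M_{j,e}$, whereas the paper argues by contradiction, writing $n=n_1n_2$ with $n_1$ built from the ``good'' prime powers and $n_2$ from the ``bad'' ones, and showing that $n_2>1$ would force $h_j(n)\,n^{e}\le M_{j,e}\cdot M_{j,e}^{-1}=1$.
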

\noindent
\begin{proof}
Let $M=M_{j,e}$.
Suppose that $n\in S_{j,e}$, and write $n=n_1\cdot n_2$ with
$\gcd(n_1,n_2)=1$, such that
if $p^m\parallel n_1$ then $(p, m)$ is in $T_j^{e,M}$ and
if $p^m\parallel n_2$ then $(p, m)\notin T_j^{e,M}$.

If $n_2>1$ then
$$
h_j(n)n^{e}= h_j(n_1)n_1^{e} \cdot h_j(n_2)n_2^{e} \leq M \cdot \frac{1}{M} = 1,
$$
contradicting our assumption $n\in S_{j,e}$. Thus $n_2=1$ and the lemma is proved.
\end{proof}

\medskip\noindent
This means that the inner sums in Lemma (\ref{lem:lbnd}) are finite.
\begin{algorithm}
{\rm This results in the following method for computing a lower bound for $\beta$.}
\begin{itemize}
\item[(1)] Choose $J\geq 1$, and 
perform the following three steps for $j=1, 2, \ldots, J$.
\item[(2)] Choose an even integer $N_j>1$ large enough.
\item[(3)] Compute $\frac{1}{j}\sum_{n\leq N_j\atop\textrm{even}}\beta_j^*(n)$, for example
from the definitions (\ref{def:betastar}) and (\ref{def:betacomp}).
\item[(4)] Determine the set $S_{j,e}$ as follows:
\begin{itemize}
\item[(4a)] Choose $e=e_j$ and determine $T_{j,e,1}$. Then compute $M_{j,e}=\max_n h_j(n) n^e$ by choosing the product of the worst value for $(p,m)\in T_{j,e,1}$ for each prime $p$ occurring in this set. 
\item[(4b)] Choose $c=c_j$ and determine $T_{j,e,c}$.
\item[(4c)] Determine $S_{j,e}$, that is, the positive integers $n$ built up
from prime powers $p^m$ with $(p,  m)$ in $T_{j,e,c}$, for which $h_j(n)>1/n^e$.
\end{itemize}
and compute $\frac{1}{j}\sum_{n\in S_{j,e}\atop{n>N \textrm{even}}}\beta_j^*(n)$.
\item[(5)] Compute the error term $(2jeN_j^e)^{-1}\cdot\left(\frac{2}{3}\right)^j$.
\item[(6)] Take the sum of results from Steps 3 and 4, and subtract the sum of
the results of Step 5, taken over $j=1, 2, \ldots, J$.
\end{itemize}
\end{algorithm}
\begin{example}\rm
\begin{table}[hb]
\begin{tabular}{|c|l|r|l|r|r|}\hline
$j$ &  \hfill$e$\hfill    &\hfill $\#S$\hfill      &\hfill main\hfill &\hfill contribution $S$\hfill &\hfill error\hfill \\\hline
$1$ & $1$    & $0$       & $0.508058$  & & $4.18\cdot 10^{-12}$ \\
$2$ & $0.75$  & $71678431$     & $0.134230$  &$4.7096\cdot 10^{-12}$ & $2.99\cdot 10^{-9}$\\
$3$ & $0.60$  & $139189128$    & $0.048944$ & $8.949\cdot 10^{-12}$ & $3.276\cdot 10^{-7}$\\
$4$ & $0.48$  & $93183633$   & $0.020684$ & $9.7488\cdot10^{-12}$ & $2.462\cdot 10^{-6}$\\
$5$ & $0.35$ & $10201152$ & $0.009564$& $-9.1679\cdot 10^{-12}$ & $2.66\cdot 10^{-5}$\\
$6$ & $0.28$ & $27662520$  & $0.004706$& $-1.95\cdot 10^{-12}$ & $7.89\cdot 10^{-5}$\\
$7$ & $0.20$ & $24415897$ & $0.002425$& $-3.315\cdot 10^{-12}$ & $3.31\cdot 10^{-4}$\\
$8$ & $0.15$ & $65291514$ & $0.001295$ & $5.907\cdot 10^{-12}$ & $7.26\cdot 10^{-4}$ \\
$9$ & $0.03$ & $7466778$ & $0.000711$ & $-9.511\cdot 10^{-12}$& $2.59\cdot 10^{-2}$ \\\hline
$\sum_{j=1}^8$ &     & & $0.729906$  &$<10^{-10}$ & $1.1625\cdot 10^{-4}$\\\hline
\end{tabular}
\end{table}

Take $N=10^9$.  The table lists for $j=1, 2, \ldots, 9$ the values of
$e$, $\#S$, $M$, the main term from Step (3) in the algorithm,
the contribution from $S$ in Step (4) and the error term from Step (5).

As an indication of the size of the numbers involved: the largest element
of $S_{2, 0.75}$ is the 24-digit product of the 18 primes less than 62.
Since the error term for $j=9$ exceeds the contribution of the main term, we
have not included this one in the final sum.

As a consequence, we find that $\beta\geq 0.728743$.
\end{example}
\begin{corollary}
$$\lambda<-0.030.$$
\end{corollary}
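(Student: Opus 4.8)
The plan is short, because essentially all of the real work has already been done in the preceding sections. By Theorem~\ref{thm:even} we have the exact identity
$$\lambda = \alpha - \beta,$$
where $\alpha = 2\alpha(2) + \sum_{p\geq 3\text{ prime}}\alpha(p)$ is the quantity estimated in the section ``Computing $\alpha$'' and $\beta = \sum_{j\geq 1}\tfrac{1}{j}\bigl((2\beta_j(2)-1)\prod_{p\geq 3\text{ prime}}\beta_j(p)\bigr)$ is the quantity estimated in the section ``Computing $\beta$''. So it suffices to combine the upper bound for $\alpha$ with the lower bound for $\beta$ already obtained, and subtract.

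First I would invoke the Corollary bounding $\alpha$, namely $\alpha < 0.69831705$; this is a rigorous upper bound because the infinite sums defining $\alpha$ were truncated using the explicit tail estimates $A(p,M) = \tfrac{p}{p-1}\bigl(p^{-(M+1)}\bigr)^2$ together with $\sum_{p>N}\alpha(p) < 1/N$. Next I would invoke the Example computing $\beta$, which yields $\beta \geq 0.728743$; this is a rigorous lower bound via Lemma~\ref{lem:lbnd}, the finiteness of the sets $S_{j,e}$ (established in the two lemmas preceding the Algorithm), and the explicit error term $\tfrac{1}{2jeN_j^e}(2/3)^j$ summed over $j=1,\dots,8$. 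Subtracting,
$$\lambda = \alpha - \beta < 0.69831705 - 0.728743 = -0.03042\ldots < -0.030,$$
which is the assertion.

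The only point requiring care --- and the closest thing to an obstacle at this stage --- is the bookkeeping of rounding: one must verify that the decimal approximations displayed for the finite sums in both computations are genuine one-sided bounds ($\alpha$ rounded up, $\beta$ rounded down), and that the resulting gap $0.728743 - 0.69831705 \approx 0.0304$ exceeds $0.030$ with enough room that no accumulation of truncation error can reverse the inequality. Since this gap is roughly $4\times 10^{-4}$ larger than needed, while the error terms controlling both computations are of order $10^{-4}$ or smaller, the margin is comfortable. Hence $\lambda < -0.030$, and consequently the aliquot growth factor satisfies $\mu = e^{\lambda} < e^{-0.030} < 1$, which is the main theorem stated in the introduction.
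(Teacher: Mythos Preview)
Your proposal is correct and is exactly the argument the paper intends: the Corollary is stated immediately after the computation yielding $\beta\geq 0.728743$, and the paper gives no separate proof because the inequality $\lambda=\alpha-\beta<0.69831705-0.728743<-0.030$ follows at once from combining that lower bound with the earlier Corollary $\alpha<0.69831705$.
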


\end{document}